\newtheorem{theorem}{Theorem}[section]
\newtheorem{definition}[theorem]{Definition}
\newtheorem{remark}[theorem]{Remark}
\newtheorem{proposition}[theorem]{Proposition}
\newtheorem{corollary}[theorem]{Corollary}
\numberwithin{equation}{section}
\renewcommand{\H}{{\mathcal H}}
\def\C{\mathbb C}
\def\R{\mathbb R}
\def\C{\mathbb C}
\def\R{\mathbb R} 
\def\N{\mathbb N}
\def\de{\delta}
\def\et{\eta}
\def\th{\theta}
\def\ve{\varepsilon}
\def\LA{\Lambda}
\def\om{\omega}
\def\va{\varphi}
\def\Ph{\Phi}
\def\g{\mathfrak{g}}
\def\m{\mathfrak{m}}
\def\g{\mathfrak g}
\def\ve{\varepsilon}
\def\si{\sigma}
\def\om{\omega}
\def\ph{\phi}
\def\ch{\chi}
\def\ps{\psi}
\def\N{\mathbb{N}}
\def\Z{\mathbb{Z}}
\def\R{\mathbb{R}}
\def\C{\mathbb{C}}
\def\M{\mathbb{M}}
\def\Om{\Omega}
\def\PH{\Phi}
\def\ol#1{\overline{#1}}
\def\nn{\nonumber}
\def\noop#1{\Vert #1\Vert_{\rm op}}
\def\R{{\mathbb R}}
\def\C{{\mathbb C}}
\def\D{{\mathbb D}}
\def\N{{\mathbb N}}
\def\M{{\mathbb M}}
\def\G{{\mathbb G}}
\def\Z{{\mathbb Z}}
\def\B{{\mathcal B}}
\def\F{{\mathcal F}} 
\def\E{{\mathcal E}}
\def\H{{\mathcal H}}
\def\K{{\mathcal K}}
\def\iy{\infty}
\def\ol#1{\overline{#1}}
\def\no#1{\Vert #1\Vert }
\def\l#1#2{L^{#1}(#2)}
\def\inv{^{-1}}
\def\limk{\lim_{k\to\infty}}
\def\od{\odot}
\def\val#1{\vert #1\vert}
\def\ind#1#2{\rm{ind}_{#1}^{#2}}
\def\L1#1{L^1(#1)}
\def\L#1#2{L^{#1}(#2)}
\def\l#1#2{L^{#1}(#2)}
\def\ts{\times }
\def\lef({\left(}
\def\rig){\right)}
\def\ts{\times }
\def\us#1#2{\underset{#1}{#2}}
\def\wi#1\widehat{#1}
\begin{document}
\title{ The  $C^*$-algebra of the variable Mautner group.}
\author{Regeiba Hedi.}
\address{
Mathematics and Applications Laboratory, LR17ES11, Faculty of Sciences of Gabes University of Gabes,  
Cite Erriadh 6072 Zrig Gabes Tunisia.
} \email{rejaibahedi@gmail.com}

\begin{abstract}
Let $\M=P\times{M}$ be a  variable Mautner group. 
We describe the $C^*$-algebra
$C^*(\M)$ of $\M$ in terms of an algebra of operator fields defined over
$P\times{\C^2} $. 

\end{abstract}
 \maketitle
\section{\bf{Introduction.}}\label{sec:1}
The notion of a variable  group $\G=P\ts{G } $ has been introduced in   
\cite{Le-Lu}. Here  $G $ 
is a locally compact topological Hausdorff space and $P $ is a compact 
Hausdorff space. For every $p\in P $ we have a 
group multiplication  $\underset{p}{\cdot } $ defined on $G $,
such that the mappings 
\begin{eqnarray*}\label{}
 \nn (p;x,y) &\to &
 x\underset{p}{\cdot }y
 \end{eqnarray*}
and
\begin{eqnarray*}\label{}
 \nn x &\to &
 x^{-1_p}
 \end{eqnarray*}
are continuous mappings on $P\times{G\times{G}}\to {G} $ resp. on 
$P\times G\to G $.

We say that the variable group is unimodular if there exists a positive Radon  
measure $dx $ on $G $, which is also a  
left and right Haar measure for the 
groups $G_p:=(G,\underset{p}{\cdot }) $ for every $p\in P $.

A  variable unimodular group defines the family $\left(L^1( 
G_p)\right)_{p\in P}$ 
of  $L^1$-algebras $L^1( G_p)$ with respect to the  Haar measures $d 
x$.
Let $L^1(G)$ be the Banach space $L^1(G,dx)$ of $\C$-valued, 
$dx$-integrable functions on $G$, with norm 
$\displaystyle\no{f}_1=\int_G\val{f(x)}dx.$ Then 
for every $p\in P$ we have  the 
convolution 
\begin{eqnarray*}\label{}
 \nn F\underset{p}{\ast}F'(x)&:= &
 \int_G F(y)F'(y^{-1_p}\underset{p}{\cdot }x)dy,\  F,F'\in\L1 G,\ x\in G.
 \end{eqnarray*}

We denote by $C(P,L^1)$ the involutive Banach algebra  of all continuous 
mappings $F:P\to  \L1 G$ equipped with the product 
\begin{eqnarray*}\label{}
 \nn (F,F') &\to &
 (P\ni p\to F\underset{p}{\ast} F'(p)),\ F,F'\in C(P,L^1)
 \end{eqnarray*}
and the involution
\begin{eqnarray*}\label{}
 \nn F^*(p) &:= &
 F(p)^*,\  p\in P, F,F'\in C(P,L^1).
 \end{eqnarray*}
 The $C^*$-hull the algebra $C(P,L^1) $ of the unimodular  variable group $\G$ will be  
denoted by  $C^*(\G)$. 
As can be seen in \cite{Le-Lu},  the unitary dual or spectrum 
$\widehat{C^*(\G)}$ of $C^*(\G)$ is in bijection 
with the dual space $\widehat \G=\bigcup_{p\in P}\widehat{G_p}$ of $\G$.
Since an element of $\widehat{\G} $ is an equivalence class of irreducible 
unitary representations of $C^*(\G) $, we always choose in such  a 
class $\lbrack 
\pi \rbrack  $  one of its members $\pi $ and we identify $\pi $ with $\lbrack 
\pi \rbrack  $. Define 
then the Fourier 
transform $\F$  on $C^*(\G)$ by
$$\F(c)(\pi)=\pi(c)\in\B(\H_\pi),\ \text{ for all } \pi\in\widehat \G,\ c\in 
C^*(\G).$$
Then $C^*(\G)$
can be identified with the  sub-algebra $\widehat{C^*(\G)} $ of the big 
$C^*$-algebra
$\ell^\iy(\widehat \G)$ of bounded operator fields given by
$$\ell^\iy(\widehat \G)=
\left\{\ph:\widehat \G\longrightarrow\underset{\pi\in\widehat 
\G}{\bigcup}\B(\H_\pi);\ph(\pi)\in \B(\H_\pi), \no 
\ph_\iy:=\underset{\pi\in\widehat \G}{\sup}\noop{\ph(\pi)}<\iy\right\}.$$
Here $\B(\H) $ denotes the space of bounded linear operators on the Hilbert 
space $\H $.

The classical Mautner group $M_\th $ is the semidirect product 
\begin{eqnarray*}\label{}
 \nn M_\th &= &
 \R\ltimes \C^2,
 \end{eqnarray*}
where the reals act on the abelian group $\C^2 $ by
\begin{eqnarray*}\label{}
 \nn t\cdot (a,b) &:= &
 (e^{it}a,e^{i\th t}b), t\in\R, a,b\in \C
 \end{eqnarray*}
 and where $\th $ is a fixed irrational number.
 The group $M_\th $ is a solvable Lie, which is not  type I  and of 
smallest dimension with this property. 

The unitary dual of the group $M_\th $ is therefore  not explicitly known.

 In this paper, we consider $M_\th $ as a limit group of a the family 
$(M_{p,\th}, p\ne 0, )$ of exponential solvable  Lie groups. 
Perhaps this approach can help to understand the structure of $C^*(M_\th) $.

The main result of this paper is the Theorem \ref{slashGis Dstar} where such a 
description of the $C^*$-algebra $C^*(\M)$ as algebra of operator fields 
defined over 
$S:=P\times(\C^2)$. The conditions which determine these  
operator fields are presented in 
Definition \ref{conditions}.
  
 Let me thank  here Professor Jean Ludwig for his suggestions and his 
help during the preparation of this paper.
\section{\bf The variable group $\M$.}\label{variablegroup}
\begin{definition}\label{def1-1}$ $
Let $P,\text{ and } M, $ be  the topological spaces 
\begin{eqnarray*}\label{}
 P&:=&
 [-1,1],\\
 M\nn  & := &
 \R\times\C^2.
 \end{eqnarray*}
Fix  a real number $\th $.
 
 We  define the variable group $\M=\M_\th $,  $\M=(P,M)$ 
by giving for any  $p\in P$ 
 the group multiplication $\cdot_{p}$ on the set $M$ through 
 the formula 
 \begin{eqnarray*}
  (z,w,t)\cdot_{p}(z',w',t')=\left(t+t',e^{(p+i)t}z+z',e^{(-p+i\th)t 
}w+w'\right).
 \end{eqnarray*}
 In particular, if $\th $ is irrational, the groups $M_{\th} $  
are  our Mautner 
groups 
.
Furthermore for any $p\ne0 $, the group $M_p=(M, \cdot_p) $ 
is 
an exponential 
Lie group. The Haar measure of the groups $M_p,\ p\in P, $ is just Lebesgue 
measure $dx $ on 
the space $M=\R\times{\C^2} $.

Its  variable Lie algebra $  P\times\m$ is given by
\begin{eqnarray*}\label{}
 \nn P\times\m &= &
 P\times \R\times \C^2=P\times{ (\R T+\C U+ \C V) }\\
 &&
 T:=
 (1,0,0),\  
U=(0,1,0),\ V=(0,0,1),
 \end{eqnarray*}
and the Lie brackets
\begin{eqnarray*}\label{}  
 \nn \lbrack T,U\rbrack 
 &= &
 (i+p ) U,\\
 \lbrack X, V\rbrack \nn  
&= &
(i  \th -p)V. 
 \end{eqnarray*}
\end{definition}

\subsection{The $C^* $-algebra of $\M $}

\begin{definition}\label{}
\rm 
We let 
  \begin{eqnarray*}\label{}
\nn C_c^\iy(\M)  & = &
\left\{F:P\times{M}\to\C\vert\ 
F\text{ smooth  with compact 
support}\right\},\\
\nn C_c(\M)  & = &
\{F:P\times{M}\to\C\vert\ 
F{\text{  
continuous}}\},\\
 \nn \L1\M &:= &
 C(P,L^1)=\ol{C_c^\iy(\M)}^{\no{.}_1},
 \end{eqnarray*}
where
\begin{eqnarray*}\label{}
 \nn \no{F}_1 &:= &
 \underset{p\in P}{\sup}\no{F(p)}_1, \ F\in C_c^\iy(\M).
 \end{eqnarray*}
Also define the $C^* $-norm $\no{\cdot}_{C^*} $ on $C_c^\iy(\M) $ by 
\begin{eqnarray*}\label{}
 \nn \no F_{C^*} &:= &
 \sup _{p\in P}\no{F(p)}_{C^*},\ F\in C_c^\iy(\M).
 \end{eqnarray*}
This definition gives us  the usual $C^* $-algebra of the  
involutive Banach algebra $\L1\M $, which is given  as the completion
$\ol{\L1\M}^{\no{\cdot}_{C^*}} $ of the algebra $\l1\M $ for the norm 
$\no{\cdot}_{C^*} 
$. 
 \end{definition}
 For $F\in C_c^\iy(\M), \xi\in C_c(M) $ and  $p\in P, $ we then have 
that
 \begin{eqnarray*}\label{}
 \nn F\underset{p}{\ast}\xi(s,c) &= &
 \int_{M}F(p;t,d)\xi((t,d)^{-1_p}\underset{p}{\cdot }(s,c))dtdd\\
 \nn  
&= &
\int_{M}F((p;(s,c)\underset{p}{\cdot }(t,d)^{-1_p})\xi(t,d)dtdd, \ (p;s,c)\in 
\M.
\end{eqnarray*}
This fives us the left regular representation $\LA $ of $\M $ on 
the Hilbert space $\l2M $:
\begin{eqnarray*}\label{}
 \nn \LA^p(F)\xi &:= &
 F\underset{p}{\ast }\xi,  F\in \l1\M,\xi\in\l2M.
 \end{eqnarray*}

\begin{remark}\label{surjofmup}
\rm   
Denote for $p\in P $ the canonical projection $\mu_p:C^*(\M)\to C^*(M_p ) 
$ 
defined by
\begin{eqnarray*}\label{}
 \nn \mu(a)  &:= &
 a(p),\ a\in C^*(\M).
 \end{eqnarray*}
 These mappings $\mu_p  $ are surjective on $\l1\M $. Indeed, if we take any 
$F\in \l1{M_p }$, then we can consider $F $ also as a (constant) element $\ol 
F 
$ of $\l1\M $, where $\ol F(p)=F,\ p\in P, $ and then 
 \begin{eqnarray*}
 \mu_p (\ol F)=F.
 \end{eqnarray*}

The kernel $I_p \cap\l1\M $ in $\l1\M $ of the linear mapping $\mu_p  $ is 
then a 
closed ideal of the 
algebra $\l1\M $ and 
\begin{eqnarray*}\label{}
 \nn \l1\M/I_p  \cap L^1(\M)&\simeq &
 \l1{M_p },
 \end{eqnarray*}
since  $\mu_p  $ is surjective. 
Hence we also have that the mapping 
\begin{eqnarray}\label{muis surj}
  \mu_p :C^*(\M) &\to &
 C^*(M_p ),\ p\in P, 
\end{eqnarray}
  is surjective.
 \end{remark}

 \subsection{The dual space of $\M $}\label{dual bbM}
 
$ $

It is well known (see \cite{Le-Lu}) that the spectrum 
$\widehat{\M} $ of the variable group 
$\M=P\times M $ is given by
\begin{eqnarray*}\label{}
 \nn \widehat{\M} &= &
 \bigcup_{p\in P}\widehat{M_p }.
 \end{eqnarray*}

 \begin{definition}\label{pipldef}
\rm  Let for $(p, \ell)\in P\times{\C^2},  $
\begin{eqnarray*}\label{}
 \nn \pi^p _\ell &:= &
 \ind{C}{M_p }\ch_\ell,
 \end{eqnarray*}
where as before $C=\{ 0\}\times{\C^2}\subset M $ and where
$\ch_\ell(c)=e^{i \langle c,\ell\rangle },\ c\in C $.
 \end{definition}
The Hilbert space $\H_\ell^p $ of the representation $\pi_\ell^p $ 
is the space of functions 
\begin{eqnarray*}\label{}
 \nn \H_\ell^p &:= &
 \{ \xi: M\to\C;\ \xi \text{ measurable},\\
 \nn  
& &
\xi(m\underset{p}{\cdot }c)=\ch_\ell(-c)\xi(m); m\in M, c\in C,\\
\nn  
& &
\no\xi_2^2=\int_{\R}  \vert{\xi(t,0,0) }\vert ^2dt<\iy\}\\
\nn  
&\simeq &
\l2\R.
 \end{eqnarray*}
The group $M_p $ acts then by left translation on this space.
\begin{definition}\label{cstarfield}
\rm   For $a\in C^*(\M) $
define the operator field 
$\widehat a $ on $P\times\C^2 $ by
\begin{eqnarray*}\label{}
 \nn \widehat a(p, \ell) &:= &
 \pi^p_\ell(a(p)),\ \ell\in\C^2,p\in P.
 \end{eqnarray*}

 \end{definition}
Let us compute for $ p\in P$, $ \ell\in (\C^2)^*$ and $ F\in\l1{M_p }$ the 
operator $ \pi^p _\ell(F)\in \B(\l2\R)$. 
For $\ell=(f,g)\in(\C^2)^*\simeq\C^2$, $m=(t, c)\in M $ and 
$\xi\in\l2\R, s\in\R $ 
we have 
that 
\begin{eqnarray}\label{reppipll}
  \pi^p _\ell(t,c)\xi(s) &= &
 \xi((t,c)^{-1_p}\underset{p}{\cdot } (s,(0,0))\\
 \nn  
&= &
\xi((s-t, -(t-s)\underset{p}{\cdot }c)(s,0,0))\\
\nn  
&= &
e^{i\langle (t-s)\underset{p}{\cdot } c,\ell\rangle}\xi(s-t).
 \end{eqnarray}
Hence
\begin{eqnarray}\label{ind ell}
  \pi^p _\ell(F)\xi(s) &= &
 \int_M F(g)\pi^p _\ell(g)\xi(s)\\
 \nn  
&= &
\int_{\R\times{\C^2}} F((t,c))e^{i\langle (t-s)\underset{p}{\cdot } 
c,\ell\rangle}\xi(s-t)dcdt\\
\nn  
&= &
\int_{\R\times{\C^2}} F((s-t,c))e^{i\langle (-t)\underset{p}{\cdot } 
c,\ell\rangle}\xi(t)dcdt\\
\nn  
&= &
\int_{\R\times{\C^2}} F((s-t,c))e^{i\langle  
c,(-t)\underset{p}{\cdot }\ell\rangle}\xi(t)dcdt\\
\nn  
&= &
\int_\R \widehat F^2(s-t,t\underset{p }{\cdot}\ell)\xi(t)dt,
 \end{eqnarray}
where 
\begin{eqnarray*}\label{}
 \nn t\underset{p}{\cdot }(f,g) &:= &
 (e^{(-i+p)t)}f,e^{(-i\th-p)t)}g),\\
 \nn  \widehat F^2(s,\ell)
&:= &
\int_{{\C^2}} F(s,c)e^{i\langle  
c,\ell\rangle}dc, \ \ell=(f,g)\in\C^2,\ p\in 
P,\ t\in\R.
 \end{eqnarray*}
 
\begin{remark}\label{someirre}
\rm   We see 
\begin{itemize}\label{}
\item that for  $p\ne 0,\ \ell=(\ell_1,\ell_2) $ with $\ell_1\cdot \ell_2\ne 
0 $, the operators 
$\hat 
a(p, \ell) $ are compact and that for $\ell\ne(0,0) $,  the representations
\begin{eqnarray*}\label{}
 \nn C^*(\M)\to \B(\l2\R):\  
 a\to \hat a(p, \ell), 
 \end{eqnarray*}
are irreducible, since $M_p  $ is then an exponential Lie group, 
\item that for $p=0$ with $\ell=(f,g),\ f\cdot g\ne0 $  the 
representations $\pi_{\ell}^p  $ are irreducible too,
 since then the stabilizer of $\ell $ is the subgroup $C $. 
 \item that for $p=0 $ with $\ell=(f,0),\ f\ne0$ or $\ell=(0,g), g\ne 0, $ 
the 
representations $\pi_{\ell}^p  $ are not irreducible,
 since then the stabilizer of $\ell \in\C^2$ is the subgroup 
 \begin{eqnarray*}\label{}
 \nn S_{\ell}^{0} &= &
 \{ (m,c)\vert\  m\in 2\pi \Z,\ c\in C\}.
 \end{eqnarray*}
 resp.
 \begin{eqnarray*}\label{}
 \nn S_{\ell}^{0} &= &
 \left\{ (m,c)\vert\ m\in \frac{2\pi}{\th}\Z,\ c\in C\right\}.
 \end{eqnarray*}
Hence the representations
\begin{eqnarray*}\label{}
 \nn \pi^{0}_{\omega,\ell} & :=&
 \ind{S_\ell^0}{M_\th}\ch_{\omega,\ell}
 \end{eqnarray*}
where $\om\in \R/ \Z $ (resp. in $\R/{2\pi\th \Z}) $ and 
\begin{eqnarray*}\label{}
 \nn \ch_{\omega,\ell}(m,c) &:= &
 e^{ i (m\om+ \langle c,\ell\rangle )},\ m\in2\pi\Z,\ c\in C,\\
& &
\text{resp. } m\in \frac{2\pi}{\th}\Z, c\in C,
 \end{eqnarray*}
  are irreducible.

 \item  If $\ell=(0,0) $ and $p\in P $, then the representation 
 $\pi^p _{(0,0)}$ is the left regular representation of $M_p  $ on 
$\l2{M_p /C }$ and therefore equivalent to the direct integral over the space 
of 
unitary characters $Ch^{p}:=\{ \ch_{r},\ r\in\R\} $ of $M_p $, where
\begin{eqnarray*}\label{}
 \nn \ch_{r}(t,c) &:= &
 e^{i tr },\ (t,c)\in M.
 \end{eqnarray*} 

\end{itemize}
 \end{remark}
 
\subsection{Desintegration of the induced representation.}$ $

We define the Fourier Transform $ \F:\l2M \to \l2{\R\times \C^2}$ by
\begin{eqnarray*}\label{}
\nn \F(\xi)(\ell)(s)&:=&
 \int_{\C^2}\xi(s,c)e^{i\langle c,\ell\rangle }dc.
 \end{eqnarray*}
 We can disintegrate the left regular representation $ (\LA^p,\l2{M_p })$  of $ 
M_p $  
into a direct integral of irreducibles   using 
the Fourier transform $ \F$ .
Indeed, we have for $ \xi\in C_c(M_p )$ and $ \ell\in\C^2$ that the function
\begin{eqnarray*}\label{}
 \nn s\mapsto \F(\xi)(p, s,\ell),\ (s\in\R)
 \end{eqnarray*}
is contained in the Hilbert space of the representation $ 
\pi^p _{\ell}$.

\begin{definition}\label{lonec def}
  Define the subspace $\l1{M}^c$ of $\l1{M} $ by
\begin{eqnarray*}\label{}
 \nn \l1{M}^c &:= &
 \left\{ F\in\l1{M}  \text{ such that } \widehat F^2 \in C_c(\R\times\C^2)\right\}.
 \end{eqnarray*}

 \end{definition}

We take now $ p\in P$ and some $ \xi,\et\in \l2M$. Then for 
$ m=(t,c)\in M,$ we have that (forgetting about constants like the number $\pi 
$):
\begin{eqnarray*}\label{}
 \nn \langle \LA^p (m)\xi,\et\rangle  &= &
 \int_M \xi(m\inv\underset{p }{\cdot}  (u,b)\ol{\et(u,b)}dudb\\
 \nn  
&= &
\int_\R\int_{\C^2} \xi(u-t, -((t-u)\cdot c)+b )\ol{\et(u,b)}dudb\\
&\overset{\text{Plancherel}}{=}&
\int_\R\int_{\C^2} e^{- i \langle (u-t)\cdot c,\ell\rangle }\hat \xi(u-t, 
\ell)\ol{\hat\et(u,\ell)}dud\ell\\
\nn  
&=&
\int_{\C^2}\langle \pi^p _\ell(g)(\hat \xi(\ell)),\hat\et(\ell)\rangle_2 
d\ell.
 \end{eqnarray*}

 This shows that 

 \begin{eqnarray}\label{lag dec}
\nn\LA^p &= &
\oint_{\C^2}\pi^p _\ell d\ell,
 \end{eqnarray}
 
\begin{definition}\label{starreg}
\rm   Let $A $ be an involutive, semi-simple Banach algebra and let $C^*(A) $ 
be 
its $C^* $-algebra. We say that $A $ is \textit{star regular}, if the canonical 
mapping from
$Prim(C^*(A))\to Prim^*(A) $ defined by
\begin{eqnarray*}
 I\to I\cap A
 \end{eqnarray*}
is a homeomorhism for the Jacobson topologies.
 \end{definition}

It had been shown in \cite{L-S-V} that every connected locally compact group $G 
$ of polynomial group (in particular the group $M_\th $) is $* $-regular. This 
means that the group algebra $\l1G $ is star regular.
In particular this implies that 
the closure for the Jacobson topology of $I_1=ker_{\l1{M_\th}} (\pi)\in 
Prim^*(M_\th) $ 
is the primitive ideal $ker_{C^*({M_\th })}(\pi) $. 

\subsection{The topology of the orbit space.}$ $

  We define  the Kirillov-Pukanszky mapping $\K: \E(\M)\to Prim(\M)  $  by
  \begin{eqnarray*}\label{}
 \nn \K(\E_\ell^p) &:= &
 (I^p _\ell),
 \end{eqnarray*}
where for $p\in P$ and $\ell\in \C^2 $ or $\ell=s\in\R $ the symbol 
$I_\ell^p =\text{ker}(\pi^p _\ell) $ is the 
primitive ideal given by the  quasi-orbit of $\ell $.

\begin{theorem}\label{star regular variable}
The variable group $\M $ is $* $-regular.
 \end{theorem}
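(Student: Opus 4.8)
The plan is to prove $*$-regularity of the variable group $\M$ by combining three ingredients: the known $*$-regularity of the individual Lie groups $M_p$, the surjectivity of the projections $\mu_p\colon C^*(\M)\to C^*(M_p)$ from Remark~\ref{surjofmup}, and the explicit description of $\widehat\M=\bigcup_{p\in P}\widehat{M_p}$ together with the disintegration \eqref{lag dec} of the left regular representations. Recall that by Definition~\ref{starreg} we must show that the canonical map $\Phi\colon \mathrm{Prim}(C^*(\M))\to \mathrm{Prim}^*(\L1\M)$, $I\mapsto I\cap\L1\M$, is a homeomorphism for the Jacobson topologies. This map is always a continuous bijection onto its image for a symmetric Banach $*$-algebra (and $\L1\M$ is symmetric, being a continuous field of symmetric algebras $\L1{M_p}$); the surjectivity onto all of $\mathrm{Prim}^*(\L1\M)$ follows because every primitive ideal of $\L1\M$ is the kernel of an irreducible $*$-representation, which extends to $C^*(\M)$. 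So the whole content is the \emph{continuity of the inverse}, i.e. that $\Phi$ is a closed (equivalently open) map, equivalently: if a net $\pi_\alpha\to\pi$ weakly in $\widehat\M$ at the level of $\L1$-kernels, then the same convergence holds at the level of $C^*$-kernels.

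The first step is to reduce to a statement about points of $\widehat\M$. I would show that the closure of a subset $\Gamma\subseteq\mathrm{Prim}(C^*(\M))$ is governed by the closures of its ``slices'' $\Gamma\cap\mathrm{Prim}(C^*(M_p))$ inside each fibre, glued along the parameter $p$; here one uses that $P\ni p\mapsto \mu_p$ gives $C^*(\M)$ as a $C(P)$-algebra, so that $\mathrm{Prim}(C^*(\M))$ fibres continuously over $P$ with fibre $\mathrm{Prim}(C^*(M_p))$ over $p$, and similarly for $\L1\M$ over $P$ with fibre $\mathrm{Prim}^*(\L1{M_p})$. Because each $M_p$ is $*$-regular by \cite{L-S-V} (for $p\neq0$ it is exponential solvable, hence polynomial growth; for $p=0$ it is the Mautner group $M_\th$, also of polynomial growth), the fibrewise maps $\Phi_p\colon\mathrm{Prim}(C^*(M_p))\to\mathrm{Prim}^*(\L1{M_p})$ are homeomorphisms. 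The remaining task is then purely ``transversal'': show that a net of primitive ideals whose $\L1$-versions converge, and whose base points $p_\alpha\to p$ converge in $P$, must have $C^*$-versions converging to a limit in the correct fibre. For this I would argue directly with the representations $\pi^{p_\alpha}_{\ell_\alpha}$ (and the extra parameter $\om$ in the non-type-I fibre $p=0$, as listed in Remark~\ref{someirre}): using formula \eqref{ind ell} one sees that $\pi^p_\ell(F)$ depends continuously on $(p,\ell)$ in the relevant operator topology for $F\in\L1{M}^c$, and the family $\L1{M}^c$ is dense; hence weak convergence of the operators, hence of kernels in $C^*(\M)$, is controlled by convergence of the pair $(p_\alpha,\text{orbit of }\ell_\alpha)$ in the orbit space $\E(\M)$, which by the Kirillov--Pukanszky map $\K$ of the preceding subsection has the ``same'' topology as $\mathrm{Prim}(\M)$.

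Concretely the key steps, in order, are: (i) verify $\L1\M$ is symmetric and that $\Phi$ is a continuous bijection, reducing the theorem to openness of $\Phi$; (ii) set up the $C(P)$-algebra structure on both $C^*(\M)$ and $\L1\M$ and note that $\Phi$ commutes with the fibre projections over $P$; (iii) invoke $*$-regularity of each fibre $M_p$ from \cite{L-S-V}; (iv) prove a gluing lemma: a map between two $C(P)$-algebras that is a fibrewise homeomorphism on primitive ideal spaces and is continuous on the total space is automatically a homeomorphism on the total space, provided the total primitive ideal space is the ``continuous bundle'' of the fibres — which here one checks using the explicit parametrisation of $\widehat\M$ and the orbit-space topology; (v) conclude. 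The main obstacle is step (iv) at the single bad fibre $p=0$: there $M_\th$ is not type~I, its dual has the extra circle-parameter $\om$, and one must make sure that the closure of a set of ideals coming from $p\neq0$ ``lands correctly'' among the ideals $\ker\pi^0_{\om,\ell}$ — i.e. that the topology of $\mathrm{Prim}(C^*(\M))$ near $p=0$ is not finer than that of $\mathrm{Prim}^*(\L1\M)$ there. This is exactly where the $*$-regularity of $M_\th$ itself (not just of the exponential groups $M_p$) is used, together with the fact, noted after Definition~\ref{starreg}, that the Jacobson closure in $\mathrm{Prim}^*(\L1{M_\th})$ of $\ker_{\L1{M_\th}}(\pi)$ equals $\ker_{C^*(M_\th)}(\pi)$; I would leverage the disintegration \eqref{lag dec} of $\LA^p$ to express the limiting ideals at $p=0$ and match the two topologies.
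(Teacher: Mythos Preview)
Your approach differs substantially from the paper's. The paper dispatches the theorem in a single sentence by invoking the \emph{proof} of Theorem~4 in \cite{BLSV}, observing that the only extra ingredient required to carry that argument over to the variable group $\M$ is the bijectivity of the map $\E^p_\ell\mapsto I^p_\ell$ from quasi-orbits to primitive ideals for each $p\in P$. No fibrewise-plus-gluing decomposition is attempted.

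Your $C(P)$-algebra strategy is more structural and in principle reasonable, but step~(iv) as written contains a circularity. You assert that the orbit space $\E(\M)$, via the Kirillov--Pukanszky map $\K$ defined just before this theorem, ``has the `same' topology as $\mathrm{Prim}(\M)$''. That statement is precisely Theorem~\ref{kirillov variable}, which comes \emph{after} the present theorem and whose proof explicitly appeals to it (``It suffices now to use the fact that the variable group $\M$ is $*$-regular''). So you cannot invoke that homeomorphism here. What your gluing lemma actually needs is a direct argument that the hull--kernel closure on $\mathrm{Prim}(C^*(\M))$, for nets with $p_\alpha\to p_0$, is no finer than on $\mathrm{Prim}^*(L^1(\M))$; the continuity of $(p,\ell)\mapsto\pi^p_\ell(F)$ from \eqref{ind ell} gives only one inclusion, and the reverse direction (that $L^1$-convergence of kernels forces convergence of orbit parameters, hence $C^*$-convergence) is essentially the content of $*$-regularity itself and does not follow formally from fibrewise $*$-regularity. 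Two smaller loose ends: the claim that $L^1(\M)$ is symmetric because it is a continuous field of symmetric algebras is not automatic (symmetry is not in general preserved by passing to section algebras) and would need its own argument; and the assertion that $\mathrm{Prim}$ of a $C(P)$-algebra is the ``continuous bundle'' of the fibre primitive ideal spaces is exactly what can fail for nonconstant fields and is part of what must be established, not assumed.
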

\begin{proof} It  suffices to apply the proof of Theorem 4
in \cite{BLSV}, since the mapping $\E^p _{\ell}\longrightarrow I^p _{\ell}$ 
from 
$\E(\M)$ into 
$\text{Prim}(M_p ), p\in P,$ is bijective.

\end{proof}

\begin{theorem}\label{kirillov variable}
The Kirillov-Pukanszky mapping $\K $ is a homeomorphism of the quasi-orbit 
space $\E(\M)$ onto $\text{Prim}(\M) $.
 \end{theorem}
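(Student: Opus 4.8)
The plan is to establish that $\K$ is a continuous, open, bijective map between the two spaces, and then invoke the general machinery connecting the orbit picture to the primitive ideal space.

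First I would note that bijectivity of $\K$ is essentially already in hand: by the description of $\widehat{\M}$ in Subsection \ref{dual bbM} together with Remark \ref{someirre}, every primitive ideal of $C^*(\M)$ is of the form $I^p_\ell$ for a suitable parameter, and the quasi-orbit space $\E(\M)$ is by construction in bijection with the set of such parameters (for $p\neq 0$ the exponential group $M_p$ has Kirillov theory, so orbits in $\m_p^*$ correspond bijectively to $\widehat{M_p}\cong\PM{M_p}$; for $p=0$ one uses the explicit list of stabilizers given in Remark \ref{someirre}). So $\K$ is a well-defined bijection $\E(\M)\to\PM{\M}$, and the content of the theorem is that it is a homeomorphism. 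Second, I would reduce the topology of $\E(\M)$ and of $\PM{\M}$ to their fibered structure over $P$: both spaces fiber over $P=[-1,1]$, and over each $p$ the fiber of $\E(\M)$ is the quasi-orbit space $\E(M_p)$ of $M_p$, while the fiber of $\PM{\M}$ is $\PM{M_p}$. For each fixed $p$ the restricted Kirillov map $\K_p:\E(M_p)\to\PM{M_p}$ is a homeomorphism — for $p\neq 0$ this is the Kirillov theory for exponential solvable Lie groups, and for $p=0$ it is the known description of $\PM{M_\th}$ via quasi-orbits (both cases being covered by the results cited around Theorem \ref{star regular variable}, in particular \cite{BLSV}).

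Third, and this is where the real work lies, I would prove continuity of $\K$ and of $\K^{-1}$ across the fibers, i.e. control what happens as $p\to p_0$, especially as $p\to 0$. The key tool is the star-regularity established in Theorem \ref{star regular variable}: since $\l1\M$ is star regular, the map $I\mapsto I\cap\l1\M$ is a homeomorphism $\PM{C^*(\M)}\to\PM{\l1\M}$, so it suffices to prove that the analogue of $\K$ into $\PM{\l1\M}$ is a homeomorphism, and on the $L^1$-level the kernels $\ker_{\l1\M}(\pi^p_\ell)$ can be computed and compared using the explicit convolution and the Fourier-transform formula \eqref{ind ell}. Concretely, a net $\pi^{p_n}_{\ell_n}\to\pi^p_\ell$ in $\PM{\M}$ iff every $F\in\l1\M$ vanishing on $\pi^p_\ell$ has $\pi^{p_n}_{\ell_n}(F)\to 0$ in norm; using \eqref{ind ell} this translates into a statement about the supports of $\widehat F^2$ and the action $t\cdot_p\ell$, which depends continuously on $p$. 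The matching statement for $\E(\M)$ — that quasi-orbits converge iff the corresponding parameters $(p_n,\ell_n)$ converge appropriately in $P\times\C^2$ modulo the $\R$-action — follows from the continuity of the group operations $\cdot_p$ in $p$, which is part of the definition of a variable group. Putting the fiberwise homeomorphisms together with this cross-fiber continuity in both directions yields that $\K$ is a homeomorphism.

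The main obstacle I anticipate is the degeneration at $p=0$: as $p\to 0^+$ an orbit of the exponential group $M_p$ through $\ell=(f,0)$ with $f\neq 0$ (which is a full two- or three-dimensional orbit giving an irreducible representation) must limit onto the quasi-orbit structure of $M_\th$, where $\pi^0_{(f,0)}$ is reducible and decomposes over $\om\in\R/\Z$. One has to check that this reducibility is compatible with continuity of primitive ideals — i.e. that $I^0_{(f,0)} = \bigcap_\om \ker\pi^0_{\om,(f,0)}$ is indeed the limit of the $I^{p_n}_{\ell_n}$ — and symmetrically that no spurious extra limits appear. I expect this to be handled exactly as in the proof of Theorem 4 of \cite{BLSV} (already invoked for Theorem \ref{star regular variable}), transporting that argument to the variable setting fiber by fiber, so that the remaining verification is the routine bookkeeping of supports and the continuity of $(p,t,\ell)\mapsto t\cdot_p\ell$.
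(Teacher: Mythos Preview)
Your proposal uses the same core ingredients as the paper --- the $*$-regularity of $\M$ (Theorem \ref{star regular variable}) and the explicit Fourier-kernel formula \eqref{ind ell} --- so the strategies are close in spirit. There are, however, two points worth flagging.

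First, the paper's argument is more direct than your fibered decomposition: rather than invoking the fibrewise Kirillov homeomorphisms $\K_p$ and then patching across fibers, the paper simply takes a sequence $(p_k,(u_k,f_k,g_k))\to(p,(u,f,g))$ in $P\times\m^*$, shows that $F(p_k)\to F(p)$ in $L^1$ forces $\widehat F^2(p_k,u,\cdot)\to\widehat F^2(p,u,\cdot)$ uniformly, and reads off directly that $\bigcap_{s\in S} I^{p_s}_{\ell_s}\subset I^p_\ell$ for any infinite $S\subset\N$. The case $(f,g)=0$ (characters) is handled by a separate one-line argument with an extra Fourier transform in the $\R$-variable. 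Your fibrewise reduction is not wrong, but it imports results (the Kirillov--Pukanszky picture for the individual $M_p$) that the direct computation does not need. For the reverse direction (openness of $\K$), the paper does not argue from scratch but simply invokes \cite{Lud1}, Theorem 4.1.

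Second, your stated criterion for Jacobson convergence is backward. You write that $\pi^{p_n}_{\ell_n}\to\pi^p_\ell$ in $\text{Prim}(\M)$ if and only if every $F$ with $\pi^p_\ell(F)=0$ satisfies $\pi^{p_n}_{\ell_n}(F)\to 0$; but hull-kernel convergence $I_n\to I$ means $\bigcap_{n\geq N} I_n\subset I$ for all $N$, i.e.\ if $\pi^{p_n}_{\ell_n}(F)=0$ for all large $n$ then $\pi^p_\ell(F)=0$ --- the implication runs the other way. The paper uses exactly this latter formulation when it passes from ``$F\in\bigcap_{s\in S}I^{p_s}_{\ell_s}$'' to ``$\widehat F^2$ vanishes on the limit orbit, hence $F\in I^p_\ell$''. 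This is a slip rather than a structural gap, since the computation you describe (supports of $\widehat F^2$ and continuity of $(p,t,\ell)\mapsto t\underset{p}{\cdot}\ell$) is the right one once the direction is corrected.
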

\begin{proof} Let $(p_k,(u_k,f_k,g_k))_k $ be  a converging sequence in 
$(P\times{\m^*}) $ with limit point $(p, (u,f,g) )$. Take $F\in\l1\M $. 
Then $F(p_k)\in\l1{\R\times{\C^2}} $ converges in $L^1 $-norm to $F(p) $. 
 
But then for almost every $u\in\R $ we have that the $\l1{\C^2} $-limit of the 
sequence $(F(p_k, u)) $ is the $L^1 $-function $F(p, u) $.
Hence the $L^\iy(\C^2) $ sequence $(\widehat F^2(p_k,u)) $ converges 
uniformly to $ 
(\widehat F^2(0,u)). $ This shows that for any infinite subset $S\subset \N 
$ and every $F\in \underset{s\in S}{\bigcap}I^{p_s}_{ (u,f_s,g_s)} $  we have that 

\begin{eqnarray*}\label{}
 \nn \widehat F^2(p,  u, (M_p )\us p\cdot (f,g))=\{ 0\},
 \end{eqnarray*}
which means that $F\in I_{ (f,g)^p }
$. We have shown that   
$\underset{s\in S}{\bigcap}I^{p_s}_{ (f_s,g_s)}\subset I^{p}_{ (f,g)} $.

If $(f,g)=0 $, then we use the sequence of numbers 
\begin{eqnarray*}\label{}
 \nn \left(\int_\R \widehat F^2(p_k, (v, 
f_k,g_k))e^{-2\pi i uv}dv\right)_k,
 \end{eqnarray*}
 which converges to $\widehat 
F^{1,2}(p, u,0,0) $.

This 
again shows that $\underset{s\in S}{\bigcap}I_{(u_s,f_s,g_s)}^{p_s}\subset I^{p}_{ 
(u,0,0)} $.

It suffices now to use the fact that the variable group $\M $ is $*$-regular.

For the other direction we apply \cite{Lud1} Theorem 4.1. resp. the proof of 
Theorem \ref{kirillov variable}.

\end{proof}

\section{\bf Limit conditions.  }\label{limitcondition}
 
\begin{definition}\label{rpk def} 
   \end{definition}
 For  $C>0 $ let 
\begin{eqnarray*}\label{}
 \nn M_C & =&
M_{ \vert{\cdot  }\vert \leq C}
 \end{eqnarray*}
be the multiplication operator on $\l2\R $ with the characteristic function of 
the interval $[-C,C] $.

Choose for any $\de\in [0,1] $ an $\ve_\de>0 $ such that the function $\de\to  
\frac{\de}{\ve_\de} $ is decreasing with limit 0 and such that 
$\lim_{\de\to 0}\frac{\de}{\ve_\de}=0 $. 
 
Let 
\begin{eqnarray}\label{rpk def}
 \nn R^\de_j &:= &
 \frac{j}{\ve_\de}, j\in\Z,\ \de\in[-1,1]\setminus \{ 0\}. 
\end{eqnarray}

 For any $\de\in [0,1]\setminus \{ 0\} $ and $j\in \Z ,$ let 
 $M^\de_j $ be the multiplication operator of $\l2\R $ with the characteristic 
function of the interval 
\begin{eqnarray*}\label{}
 \nn 
I_\de(j):=\left[R^\de_{j},R^\de_{j+1}\right]=\left[\frac{j}{\ve_\de},\frac
{j+1}{\ve_\de}
\right]=I_\de(0)+R^\de_j
.
 \end{eqnarray*}
 Let now $\de\to r(\de)$ be a positive function defined on $[0,1]\setminus \{ 0
\}$, 
such that 
$\lim_{\de\to 0 }r(\de)=+\infty, \lim_{\de\to 0} \frac{r(\de)}{R^\de_1}=0$.  
Then for any $\de\in [0,1]\setminus \{ 0\} $ and for all $j\in \Z $, let 
 $N^\de_j $ be the multiplication operator of $\l2\R $ with the characteristic 
function of the interval  
\begin{eqnarray*}\label{}
 \nn J_\de(j):=\left[R^\de_{j}-r(\de),R^\de_{j+1}+r(\de)\right], j\in\Z.
 \end{eqnarray*}
 
Let for $z=t+iu\in\C,p\in P, $ and $\ell=(\ell_1,\ell_2)\in\C^2 $
\begin{eqnarray*}\label{}
 \nn z\underset p \od \ell &:= &
 (e^{p t+iu}\ell_1,e^{-p t+i\th u}\ell_2).
 \end{eqnarray*}

Let for $a\in C^*(\M)  $, $p\in P,\de\in [0,1]\setminus \{ 0\}$ and  $\ell\in 
\C^{2} 
$:
\begin{eqnarray}\label{sipl def}
  \si^{p} _{\ell}(a) &:= &
 \sum_{j\in\Z}M^p_j\circ  
\pi^{0}_{R^p\underset p \od \ell}(a)\circ  
N^\de_j.
 \end{eqnarray}
 We observe that for any $a\in C^*(\M),p\in P\setminus \{ 0\},  $ and 
$\xi\in\l2\R $, we have 
that
 \begin{eqnarray*}\label{}
 \nn  \Vert{ \si^{p} _{\ell}(a)\xi}\Vert_{ 2} ^2 &= &
 \sum_{j\in\Z} \Vert{M^p_j }\circ  
\pi^{0}_{R^p_j\underset p \od \ell}(a)\circ  
N^p_j(\xi)\Vert _2 ^2\\
\nn  
&\leq &
  \no a_{C^*}^2\left(\sum_{j\in\Z}\Vert  
N^p_j(\xi)\Vert _2 ^2\right)\\
\nn  
&\leq &
\no a_{C^*}^2\left(\sum_{j\in\Z}  
\Vert (M^p_{j-1}+M^p_j+M^p_{j+1})(\xi)\Vert  _2 ^2\right)\\
\nn  
&\leq &
9\no a_{C^*}^2\Vert \xi\Vert _2 ^2.
 \end{eqnarray*}

\begin{proposition}\label{sikminuspiO}
For any $a\in C^*(\M)  $, we have that
\begin{eqnarray*}\label{}
 \nn \lim_{\de\to 0} \sup_{\underset{ \vert{p }\vert \leq \de}{\ell\in\C^2}}
 \noop{\si^{p} _{\ell}  (a)-\pi^{p} _{\ell}(a)} &= &
 0.
 \end{eqnarray*}

 \end{proposition}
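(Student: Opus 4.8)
The plan is to reduce the statement to a dense subalgebra and then to exploit the explicit integral kernel of $\pi^p_\ell(F)$ computed in \eqref{ind ell}. First I would observe that it suffices to prove the estimate for $a$ in the dense subalgebra $\L1\M^c$ of Definition~\ref{lonec def}: indeed both $\si^p_\ell$ and $\pi^p_\ell$ are contractive (for $\si$ this is the bound $\noop{\si^p_\ell(a)}\le 3\no a_{C^*}$ just established, for $\pi$ it is the $C^*$-norm estimate), so if $\no{a-F}_{C^*}<\eta$ with $F\in\L1\M^c$ then $\noop{\si^p_\ell(a)-\pi^p_\ell(a)}\le \noop{\si^p_\ell(F)-\pi^p_\ell(F)}+4\eta$ uniformly in $(p,\ell)$. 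Hence fix such an $F$ with $\widehat F^2\in C_c(\R\times\C^2)$, say supported in $\{\val s\le C\}\times\C^2$.

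Next I would make both operators explicit on $\l2\R$. By \eqref{ind ell}, $\pi^p_\ell(F)$ has kernel $K^p_\ell(s,t)=\widehat F^2(s-t,\,t\underset p\cdot\ell)$, which vanishes unless $\val{s-t}\le C$; so $\pi^p_\ell(F)$ is a band operator of width $2C$. For $\si^p_\ell(F)=\sum_j M^p_j\circ\pi^0_{R^p_j\od_p\ell}(F)\circ N^p_j$, the $j$-th summand has kernel $\mathbf 1_{I_p(j)}(s)\,\widehat F^2(s-t,\,t\underset 0\cdot(R^p_j\od_p\ell))\,\mathbf 1_{J_p(j)}(t)$. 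The key point is that the intervals $I_p(j)$ have length $1/\ve_p\to\infty$ as $p\to 0$, while the band width $2C$ is fixed and $r(p)\to\infty$ with $r(p)/R^p_1\to 0$; so for $s\in I_p(j)$ and $\val{s-t}\le C$ one automatically has $t\in J_p(j)$, and moreover at most the two neighbouring blocks $I_p(j),I_p(j\pm1)$ meet any fixed band. Thus on the band $\val{s-t}\le C$ the operator $\si^p_\ell(F)$ acts, for $s\in I_p(j)$, by the kernel $\widehat F^2(s-t,\,t\underset 0\cdot(R^p_j\od_p\ell))$, and the whole difference $\si^p_\ell(F)-\pi^p_\ell(F)$ is again a band operator of width $\le 2C$ whose kernel at $(s,t)$, with $s\in I_p(j)$, equals
\begin{eqnarray*}
 D^p_\ell(s,t)=\widehat F^2\!\left(s-t,\ t\underset 0\cdot(R^p_j\underset p\od\ell)\right)-\widehat F^2\!\left(s-t,\ t\underset p\cdot\ell\right).
\end{eqnarray*}

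The estimate is then finished by comparing the two arguments in the second slot. A band operator of width $2C$ with kernel bounded by $\eta$ in sup-norm has operator norm at most $2C\eta$ (Schur test), so it suffices to show $\sup_{s,t,\ell}\val{D^p_\ell(s,t)}\to 0$ as $p\to 0$ over $\val{s-t}\le C$. Here I would use uniform continuity of $\widehat F^2$ together with the fact that, writing $t=R^p_j+t'$ with $\val{t'}\le 1/\ve_p+C$, the two vectors $t\underset 0\cdot(R^p_j\underset p\od\ell)$ and $t\underset p\cdot\ell$ differ by factors $e^{\pm p R^p_j}=e^{\pm p j/\ve_p}$ coming from the mismatch between the $p$-twisted dilation $\underset p\od$ at the basepoint $R^p_j$ and the genuine action $\underset p\cdot$; these converge to $1$ uniformly in $j$ because $\val{p}/\ve_p\to 0$ controls $\val{p}R^p_{j}$ on the relevant range, while the $e^{\pm pt'+\cdots}$ corrections are likewise $\to 1$ since $\val p(1/\ve_p+C)\to 0$. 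Multiplying $\ell$ by a factor tending to $1$ uniformly and using that $\widehat F^2$ has compact support (so only bounded $\ell$ in the second slot matter, and on a compact set uniform continuity applies) yields $\val{D^p_\ell(s,t)}\to 0$ uniformly.

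The main obstacle I anticipate is the bookkeeping in the previous paragraph: one must choose the auxiliary functions $\ve_\de$ and $r(\de)$ (already fixed in Definition~\ref{rpk def} precisely so that $\de/\ve_\de\to 0$ and $r(\de)/R^\de_1\to 0$) and then verify \emph{uniformly in $j\in\Z$ and in $\ell\in\C^2$} that the multiplicative error between $t\underset 0\cdot(R^p_j\underset p\od\ell)$ and $t\underset p\cdot\ell$ tends to $1$; the subtlety is that $j$ ranges over all of $\Z$, so one cannot bound $R^p_j$, only the product $\val p R^p_j=\val{p}\,\val j/\ve_p$ \emph{relative} to where $\widehat F^2$ is supported — which is exactly why the off-diagonal blocks contribute nothing and why only $t$ in the fixed band around $s\in I_p(j)$ enters. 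Once that uniform smallness is in hand, the Schur-test bound $2C\cdot o(1)$ closes the argument.
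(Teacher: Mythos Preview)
Your approach is essentially the paper's: reduce to $F$ with $\widehat F^2$ compactly supported, use the band structure to see that $M^p_j\circ\pi^p_\ell(F)=M^p_j\circ\pi^p_\ell(F)\circ N^p_j$, compare the two kernels blockwise, and turn a pointwise kernel estimate into an operator-norm bound. The paper uses a Lipschitz bound $\val{\widehat F^2(u,\ell)-\widehat F^2(u,\ell')}\le\va(u)\,\val{\ell-\ell'}\,\ps(\ell,\ell')$ with $\ps$ compactly supported and then a block-$\ell^2$ sum; your uniform-continuity/Schur-test variant is an equally valid packaging of the same idea.

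There is, however, a slip in your identification of the multiplicative error. For $s\in I_\de(j)$ and $\val{s-t}\le C$, the two second-slot arguments are
\begin{eqnarray*}
t\underset 0\cdot(R^p_j\underset p\od\ell)=\big(e^{-it}e^{pR^p_j}\ell_1,\ e^{-i\th t}e^{-pR^p_j}\ell_2\big),\qquad
t\underset p\cdot\ell=\big(e^{(-i+p)t}\ell_1,\ e^{(-i\th-p)t}\ell_2\big),
\end{eqnarray*}
so their componentwise ratio is $e^{\pm p(R^p_j-t)}=e^{\mp pt'}$, \emph{not} $e^{\pm pR^p_j}$. The assertion that ``$\val p R^p_j$ is controlled on the relevant range'' is false as written, since $\val p R^p_j=\val p\,\val j/\ve_\de$ is unbounded in $j$; what is actually small, uniformly in $j$, is $\val p\,\val{t'}\le \val p(1/\ve_\de+C)\le \de/\ve_\de+C\de\to 0$. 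With this correction your final step goes through: whenever one of the two arguments lies in the (compact) support of $\widehat F^2$, say with modulus $\le C_0$, the other differs from it by at most $\val{1-e^{\pm pt'}}C_0$, and uniform continuity of $\widehat F^2$ gives $\sup\val{D^p_\ell}\to 0$, hence $\noop{\si^p_\ell(F)-\pi^p_\ell(F)}\le 2C\cdot o(1)$ by Schur. This is exactly the mechanism in the paper; the compact support of $\ps$ plays the same role there of bounding $e^{\pm pR^p_j}\val{\ell_i}$ by a constant once one is on the support.
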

\begin{proof} 
There exists for any $F\in\l1\M $, for which $\widehat F^2\in C_c^\iy(\R\times 
\C^2),
$ some $\va\in C_c(\R,\R_{\geq 0}) $ and $\ps\in C_c(\C^2\times{\C^2},\R_{\geq 
0}) $, such 
that
\begin{eqnarray*}\label{}
 \nn \vert \widehat F^2(u,\ell)-\widehat F^2(u,\ell')\vert   &\leq  
&
 \va(u)\vert{\ell-\ell' }\vert \ps(\ell,\ell'),\ u\in\R,\ \ell,\ell'\in\C^2.
 \end{eqnarray*}
  Now, since $\va$  and $\ps $ are compactly supported, there exists $C>0 $ 
such that 
 $\va(u)=0 $ whenever $ \vert{u }\vert >C $ (resp. $\ps(\ell,\ell')=0 $ if $ 
\vert{\ell }\vert >C$ and $ 
\vert{\ell' }\vert >C$). Hence for any $j\in \Z $, 
 if $u\in I_\de(j) $ then $\va(u-t) \ne 0$ implies that for  $\de=p-p_0$ 
small 
enough we have that $t\in I_\de(j-1)\cup I_\de(j)\cup I_\de(j+1) $.

Therefore
\begin{eqnarray*}\label{}
 \nn M_j^p\circ  \pi^p _{\ell}(F) &= &
 M_j^p\circ  \pi^p _{\ell}(F)\circ N^p_j,\ \ell\in \C^2,\ p\in P.
 \end{eqnarray*}
 
We then have  for $\ell=(\ell_1,\ell_2)\in 
\C^2 $, 
that
\begin{eqnarray}\label{sip-pi0}
& &\\
 & &
 \nn(\si^{p} _{\ell}(F)-\pi^{p} _{\ell}(F))(\xi)(u)\\
\nn&=&
\sum_{j\in\Z}M^p_j\circ  \pi^0_{R^p_j\underset p \od \ell}(F)\circ  
 N^p_j-(\sum_{j\in\Z}M^p_j)\circ  \pi^{p} _{\ell}(F)(\xi)(u)\\
\nn&=&
\sum_{j\in\Z}(M^p_j\circ  \pi^0_{R^p_j\underset p \od \ell}(F)-\pi^{p} 
_{\ell}(F))(\xi)(u)\\
\nn &=&
\sum_{j\in\Z}M^p_j\circ(\pi^0_{ (e^{p R^p_j}\ell_1, e^{-p 
R^p_j}\ell_2)}(F)-\pi^{p} _{(\ell_1,\ell_2)}(F))(\xi)(u)\\
\nn &= &
\sum_{j\in\Z} 1_{I_\de(j)}(u)\Big(\int_{ \R} (\widehat 
F^2(u-t,(R^p_j+it)\underset p{\od}\ell)-\\
\nn  
&- &
\widehat F^2(u-t,(t+it)\underset p{\od}\ell)\xi(t)dt\Big).
 \end{eqnarray}

Furthermore for any $\ell=(\ell_1,\ell_2)\in\C^2$, $p\in P\setminus \{ 0\}$, $ 
t,u\in I_\de(j)$ and $j\in\Z $, we have  that
\begin{eqnarray}\label{et-eR}
 & &\sup_{t\in I_\de(j)}\vert(  e^{(p R^p_j+it} \ell_1,e^{(-p R^p_j+i\th t} 
\ell_2) -(e^{(p+i)t}\ell_1,e^{(-p+i\th) t} \ell_2)\vert  \\
\nn  
&\leq &
\underset{t\in I_\de(j)}{\sup}
(\vert e^{pR^p_j}-e^{pt}\vert  \vert{\ell_1 }\vert +\vert 
e^{-pR^p_j}-e^{-pt}\vert 
 \vert{\ell_2 }\vert )\\
\nn  
& \leq&
\underset{t\in I_\de(0)}{\sup}
(\vert e^{p(t-R^p_1)}-1\vert e^{pR^p_j} \vert{\ell_1 }\vert +\vert 
e^{p(R^p_1-t)}-1\vert 
e^{-pR^p_j} \vert{\ell_2 }\vert ).
 \end{eqnarray}
Therefore
 \begin{eqnarray*}\label{}
 \nn \nn  
& & 
\vert \widehat 
F^2(u-t,(R^p_j+it)\underset{p}{\od} \ell)-
\widehat F^2(u-t,
(t+it)\underset{p}{\od}\ell)\vert \\
\nn  
&\leq  &\underset{t\in I_\de(j)}{\sup}
(\vert (e^{pR^p_j}-e^{pt})\ell_1\vert +\vert (e^{-pR^p_j}-e^{-pt})\ell_2\vert 
)\va(u-t)\ps(e^{pR^p_j+it}\ell_1, e^{-pR^p_j+i\th t}\ell_2)\\
\nn  
&\overset{(\ref{et-eR})}{\leq}  &
(\underset{t\in I_\de(0)}{\sup}
(\vert e^{p(t-R^p_1)}-1\vert e^{pR^p_j}\vert \ell_1\vert +\vert 
e^{p(R^p_1-t)}-1\vert 
e^{-pR^p_j}\vert \ell_2\vert ))\va(u-t) 
\ps(({pR^p_j+it})\underset{p}{\od }\ell)\\
\nn  
&\leq  &K  pR^p_1\va(u-t)( 
e^{pR^p_j}\vert \ell_1\vert+e^{-pR^p_j} \vert{\ell_2 }\vert )\ps(e^{it}e^{p 
R^p_j} \ell_1,e^{i\th t}e^{-p R^p_j} \ell_2)) \\
\nn  
&\leq &
\frac{KC_F\de}{\ve_\de}\va(u-t) , 
t,u\in\R,
 \end{eqnarray*}
for some $K>0, C_F>0.$

Hence we have that 
 \begin{eqnarray}\label{est sip-pi0}
  \nn& &
 \no{(\si^{p} _{ (\ell_1, \ell_2)}(F)-\pi^{p} _{(\ell_1, \ell_2)}(F))(\xi)
}_2^2\\
&= &
\nn\sum_{j\in\Z}\no{M^p_j(\si^{p} _{ (\ell_1, \ell_2)}(F)-\pi^{p} _{(\ell_1, 
\ell_2)}(F))(\xi)}^2\\
\nn  
\nn&= &
\sum_{j\in\Z}\no{M^p_j(\si^{p,\de} _{ (\ell_1, \ell_2)}(F)-\pi^{p} _{(\ell_1, 
\ell_2)}(F))(M^p_{j-1}+M^p_{j}+M^p_{j+1})(\xi)}^2\\
\nn&\leq&
(KC_F\frac{\de}{\ve_\de}\no\va_1)^2 
(\sum_{j\in\N}\no{M^p_j(\xi)}_2^2)\\
\nn&=&
9(KC_F\frac{\de}{\ve_\de}\no\va_1)^2 \no{\xi}_2^2,\ \xi\in\l2\R.
 \end{eqnarray}

This shows that
\begin{eqnarray*}\label{}
 \nn \lim_{\de\to 0} \sup_{\underset{ \vert{p }\vert \leq \de}{\ell\in\C^2}}
 \noop{\si^{p} _{\ell}  (a)-\pi^{p} _{\ell}(a)} &= &
 0.
 \end{eqnarray*}

The statements  then  is  a consequence of the density in $\l1M $ of the 
subspace generated by the 
set $\left\{ F\in\l1M\vert\  \widehat F^2\in C_c^\iy(\R\times{\C^2})\right\} $.
\end{proof}

\begin{corollary}\label{limpikis0}
 
For any $a\in C^*(\M) $, such that $a(0)=0 $, 
we have that
\begin{eqnarray*}\label{}
 \nn 
\lim_{p\to 0}\noop{\LA^p (a)} &= &
 0
 \end{eqnarray*}
 \end{corollary}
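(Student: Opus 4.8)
The plan is to deduce this from Proposition \ref{sikminuspiO} together with the decomposition \eqref{lag dec} of the left regular representation and the crucial fact that $\si^p_\ell(a)$ only "sees" the fiber $a(0)$ at $p=0$. First I would recall that by \eqref{lag dec} we have $\LA^p(a(p))=\oint_{\C^2}\pi^p_\ell(a(p))\,d\ell$, so that
\begin{eqnarray*}
\noop{\LA^p(a)}=\sup_{\ell\in\C^2}\noop{\pi^p_\ell(a(p))}.
\end{eqnarray*}
Thus it suffices to show that $\sup_{\ell\in\C^2}\noop{\pi^p_\ell(a(p))}\to 0$ as $p\to 0$.

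Next I would use Proposition \ref{sikminuspiO}: given $\varepsilon>0$, there is $\de>0$ so that for all $p$ with $|p|\le\de$ and all $\ell\in\C^2$ one has $\noop{\si^p_\ell(a)-\pi^p_\ell(a)}<\varepsilon$. So it remains to bound $\noop{\si^p_\ell(a)}$ for small $|p|$. Here the key point is the definition \eqref{sipl def}: $\si^p_\ell(a)=\sum_j M^p_j\circ\pi^0_{R^p_j\,\od_p\,\ell}(a)\circ N^\de_j$, in which $a$ enters only through the representations $\pi^0_{\bullet}(a)=\pi^0_{\bullet}(a(0))$ at the fiber $p=0$. Since $a(0)=0$ we get $\pi^0_{R^p_j\,\od_p\,\ell}(a)=\pi^0_{R^p_j\,\od_p\,\ell}(a(0))=0$ for every $j$, hence $\si^p_\ell(a)=0$ identically. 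Therefore $\noop{\pi^p_\ell(a(p))}=\noop{\pi^p_\ell(a)}\le\noop{\si^p_\ell(a)}+\varepsilon=\varepsilon$ for all $\ell\in\C^2$ and all $|p|\le\de$, which gives $\noop{\LA^p(a)}\le\varepsilon$ for $|p|\le\de$, as desired.

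The main subtlety I expect is not a deep one but must be handled with care: one has to be sure that $\pi^0_{\ell'}$ in the formula for $\si^p_\ell(a)$ really is evaluated on the fiber $a(0)$ (the superscript $0$ in $\pi^0$), so that the hypothesis $a(0)=0$ forces $\si^p_\ell(a)=0$; this is exactly how $\si^p_\ell$ was built in \eqref{sipl def}. A secondary point is the reduction $\noop{\LA^p(a)}=\sup_\ell\noop{\pi^p_\ell(a(p))}$, which follows from the direct integral decomposition \eqref{lag dec} and the general fact that the operator norm of a direct integral of representations is the essential supremum of the fiberwise norms; since $p\mapsto\pi^p_\ell(a(p))$ and $\ell\mapsto\pi^p_\ell(a(p))$ are continuous enough (for $a$ in the dense subalgebra, and then by density in general), the essential supremum is the genuine supremum. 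With these two observations in place the corollary is immediate.
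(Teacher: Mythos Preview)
Your argument is correct and follows the same route as the paper. The paper's proof is extremely terse---it records only the identity $\no{a(p)}_{C^*(M_p)}=\sup_{\ell\in\C^2}\noop{\pi^p_\ell(a(p))}$ and leaves the reader to combine it with Proposition~\ref{sikminuspiO} and the observation that $\si^p_\ell(a)$ depends only on $a(0)$; you have simply written out these implicit steps explicitly.
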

    \begin{proof} This follows from the fact that
    \begin{eqnarray*}\label{}
 \nn \no a &= &
 sup_{\ell\in\C^2}\noop{\pi^p_\ell(a)}, a\in C^*(M_p),p\in P.
 \end{eqnarray*}

\end{proof}
                                                             
 \subsection{$p_0 $ different from $ 0$}$ $

We observe that for $p,p_0 $ different from $ 0$, the groups 
$M_p $ and $M_{p_0} $ are isomorphic. Indeed the mappings $h_{p_0,p} $ 
defined by 
\begin{eqnarray*}\label{}
 \nn h_{p_0,p}(s,c) &:= &
 (\frac{p_0}{p}s,c), (s,c)\in M,
 \end{eqnarray*}
is an isomorphism from $M_{p_0} $ onto $M_p $ since for 
$(s,c), (s',c')\in M $ we have that
\begin{eqnarray*}\label{}
 \nn h_{p_0,p}((s,c)\underset{p_0}{\cdot }(s',c')) &= &
 h_{p_0,p}(s+s',e^{-p_0s' }\cdot c+c')\\
 \nn  
&= &
(\frac{p_0}{p}(s+s'),e^{-p_0s' }\cdot c+c'),
 \end{eqnarray*}
and
\begin{eqnarray*}\label{}
 \nn h_{p_0,p}(s,c)\underset{p}{\cdot }h_{p_0,p}(s',c') &= &
 (\frac{p_0}{p}s,c)\underset{p}{\cdot }(\frac{p_0}{p}s',c')\\
 \nn  
&= &
(\frac{p_0}{p}(s+s'),e^{-(p\frac{p_0}{p}s')}c+c'') \\
\nn  
& =&
(\frac{p_0}{p}(s+s'),e^{-p_0s'}c+c').
 \end{eqnarray*}

Take $F\in \l1{\M}, p\in P\setminus\{ 0\} $, such that the 
function 
\begin{eqnarray*}\label{}
 \nn (p,s,c) &\to &
 \widehat F^2(p,s,\ell)
 \end{eqnarray*}
is contained in $C_c^\iy(P\times\R\times{\C^2}) $.
Then
\begin{eqnarray*}\label{}
 \nn \left(\pi_\ell^p(F)-\pi^{p_0}_\ell(F)\right)(\xi)(s) &= &
 \int_\R \left(\widehat F^2(p,s-t,t\underset{p}{\cdot }\ell)-\widehat F^2(p_0, 
s-t,t\underset{p_0}{\cdot }\ell)\right)\xi(t)dt
 \end{eqnarray*}
Therefore 
\begin{eqnarray*}\label{}
 \nn \lim_{\de\to 0} \sup_{\underset{ \vert{p }\vert \leq \de}{\ell\in\C^2}}
 \noop{\pi^{p_0} _{\ell}  (a)-\pi^{p} _{\ell}(a)} &= &
 0.
 \end{eqnarray*}
We see in this way that if for some $a\in C^*(\M) $ and $p_0\in P\setminus \{ 
0\} $ we have that 
\begin{eqnarray*}\label{}
 \nn a(p_0) &= &
 0
 \end{eqnarray*}
then
automatically
\begin{eqnarray}\label{ptopo}
  \lim_{p\to p_0}\no{a(p)-a(p_0)} &= &
 0.
 \end{eqnarray}

\subsection{A $C^*$-condition.}

\begin{definition}\label{sikdef}
\rm   Let $CB(P) $ be the algebra of operator fields 
$(\Ph(p, \ell)\in\B(L^2(\R)))_{p\in P, \ell\in\C^2} $, for which the mapping
\begin{eqnarray*}\label{}
 \nn  (p,\ell) &\to&\Ph(p, \ell) \\
& &
\text{is strongly continuous on }P.
 \end{eqnarray*}
For 
an operator field $\Ph\in CB(P) $ let
\begin{eqnarray}\label{sipl def}
 \nn \si^{p} _{\ell}(\Ph) &:= &
 \sum_{j\in\Z}M^p_j\circ  
\Ph(p,R^p_j\underset p \od \ell)\circ  
N^p_j,P\ni p\ne 0,\\
\nn  \si^p(\Ph)
&:= &
\oint_{\C^2}\si^p_\ell(\Ph(p,\ell))d\ell, p\in P,\\
\nn  \Ph(p)
&:= &
\oint_{\C^2}\Ph(p,\ell)d\ell, p\in\ell.
 \end{eqnarray}
 \end{definition}

\begin{remark}\label{siisthatsi}
\rm   Let $a\in C^*(\M) $. Then, as we have seen above,
the operator field $(\widehat a(p,\ell) )_{p\in P, \ell\in \C^2}$ is contained in 
$CB(P) $.

 \end{remark}
\begin{definition}\label{conditions}
 Let $\D^*(\M)=\D^*$ be the space consisting of all 
operator  fields $\Ph $ defined over $S:=P\times\C^2 
$  and contained in $CB(P) $ such that:
 \begin{enumerate}
 \item  $\Ph(p)\in \widehat{C^*(M_p )},\ p\in P $.
\item The field $\Ph $ satisfies the condition: for some $p_0\in P$ we have 
that 
\begin{eqnarray*}
 \lim_{p\to 0} \noop{\si^{p}_\ell(\Ph)-\Ph(p,\ell)}=0.
\end{eqnarray*}
\item  For $p_0\ne 0 $ in $P $, we have that
\begin{eqnarray*}\label{}
 \nn \lim_{p\to p_0}\no{\Ph(p)-\Ph(p_0)}&= &
 0.
 \end{eqnarray*}

 \item  the same conditions are satisfied by the field $\Ph^* $.
 \end{enumerate}
 The $C^* $-algebras $C^*(M_p), p\ne 0, $ have been characterized 
as algebras of operator fields in the paper \cite{Lin-Lud}.
 \begin{remark}\label{ph0}
{\rm It follows from Proposition \ref{sikminuspiO} and from (\ref{ptopo}) that 
for every 
$\Ph\in\widehat{C^*(\M)} 
$, the operator field $\Ph $ is contained in $\D^*(\M) $}.
 \end{remark}

\end{definition}

\begin{definition}\label{cPactiononCstar}
\rm   Let $a\in C^*(\M)  $ and $\va\in C(P) $. Define $\va\cdot a $ by
\begin{eqnarray*}\label{}
 \nn \va\cdot a(p) &:= &
 \va(p)a(p), p\in P.
 \end{eqnarray*}
 For an operator field $(\Ph(p,  \ell)\in \B(\l2\R))_{\ell\in\C^2,p\in P} 
,$ 
and $\va\in C(P) $, let 
\begin{eqnarray*}\label{}
 \nn (\va\cdot \Ph)(p, \ell) &:= &
 \va(p)\Ph(p, \ell),\ \ell\in\C^2,\ p\in P.
 \end{eqnarray*}

 \end{definition}
\begin{proposition}\label{vaaincstar}
Any $\va\in C(P) $ defines  a central multiplier  of $\D^* $.
 \end{proposition}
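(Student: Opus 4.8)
The plan is to verify that multiplication by a fixed $\va\in C(P)$ preserves each of the four defining conditions of $\D^*$ in Definition \ref{conditions}, and that it commutes with every element of $\D^*$ pointwise (hence is central). Centrality is immediate: for operator fields $\Ph,\Psi$ the products are taken pointwise in $(p,\ell)$, and $\va(p)$ is a scalar, so $(\va\cdot\Ph)\Psi(p,\ell)=\va(p)\Ph(p,\ell)\Psi(p,\ell)=\Ph(p,\ell)\va(p)\Psi(p,\ell)=\Ph(\va\cdot\Psi)(p,\ell)$; moreover $(\va\cdot\Ph)^*(p,\ell)=\ol{\va(p)}\,\Ph(p,\ell)^*=(\ol\va\cdot\Ph^*)(p,\ell)$, and since $\ol\va\in C(P)$ as well, stability of $\D^*$ under the $\va$-action will also give condition (4) for $\va\cdot\Ph$ once conditions (1)--(3) are checked.

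First I would check condition (1): $\va\cdot\Ph(p)=\oint_{\C^2}\va(p)\Ph(p,\ell)\,d\ell=\va(p)\Ph(p)$, and since $\Ph(p)\in\widehat{C^*(M_p)}$ and $C^*(M_p)$ is a $C^*$-algebra over which $C(P)$ (or rather its restriction, the scalar $\va(p)$) acts by scalar multiplication, $\va(p)\Ph(p)$ still lies in $\widehat{C^*(M_p)}$; equivalently, $\va\cdot\Ph$ is the Fourier transform of the element $\va\cdot a$ of $C^*(\M)$ whenever $\Ph=\widehat a$, and in general $\widehat{C^*(M_p)}$ is norm-closed and closed under scalar multiples. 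Next, strong continuity in $p$ of $(p,\ell)\mapsto\va(p)\Ph(p,\ell)$ follows from joint continuity of $\va$ and strong continuity of $\Ph$ together with the uniform bound $\sup_p\|\Ph(p,\ell)\|_{\mathrm{op}}<\infty$ on bounded sets, so $\va\cdot\Ph\in CB(P)$.

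Then I would verify conditions (2) and (3). For (3): $\|\va\cdot\Ph(p)-\va\cdot\Ph(p_0)\|=\|\va(p)\Ph(p)-\va(p_0)\Ph(p_0)\|\le|\va(p)|\,\|\Ph(p)-\Ph(p_0)\|+|\va(p)-\va(p_0)|\,\|\Ph(p_0)\|$, and both terms go to $0$ as $p\to p_0$ by condition (3) for $\Ph$, continuity of $\va$, and boundedness of $\va$ on the compact $P$. For (2) the key observation is that the operator $\si^p_\ell$ of Definition \ref{sikdef} is built from the multiplication operators $M^p_j$ and $N^p_j$ by composition and summation over $j$, and for a scalar $c$ one has $\si^p_\ell(c\,\Psi)=\sum_j M^p_j\circ(c\,\Psi(p,R^p_j\od_p\ell))\circ N^p_j=c\,\si^p_\ell(\Psi)$ since the scalar pulls out of each composition; applying this with $c=\va(p)$ gives $\si^p_\ell(\va\cdot\Ph)=\va(p)\si^p_\ell(\Ph)$, hence $\noop{\si^p_\ell(\va\cdot\Ph)-\va\cdot\Ph(p,\ell)}=|\va(p)|\,\noop{\si^p_\ell(\Ph)-\Ph(p,\ell)}\le\|\va\|_\infty\,\noop{\si^p_\ell(\Ph)-\Ph(p,\ell)}\to 0$ as $p\to 0$ uniformly in $\ell$, using condition (2) for $\Ph$.

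The main (and really the only) subtlety is the bookkeeping in condition (1) --- confirming that scalar multiples of elements of $\widehat{C^*(M_p)}$ stay in $\widehat{C^*(M_p)}$, which is clear since that image is a $*$-subalgebra of $\ell^\infty(\widehat{M_p})$ closed under the obvious $\C$-action --- and making sure the uniform-boundedness constant used for the $CB(P)$ and condition-(2) estimates is indeed $\|\va\|_\infty<\infty$, which holds because $P$ is compact. Once these are in place, $\va\cdot\Ph\in\D^*$ for every $\Ph\in\D^*$, the action is clearly linear and unital in $\va$, and the centrality computation above finishes the proof.

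\begin{proof}
Centrality is immediate from the fact that products of operator fields in $\D^*$ are taken pointwise in $(p,\ell)$ and $\va(p)$ is a scalar: for $\Ph,\Psi\in\D^*$,
\begin{eqnarray*}
 (\va\cdot\Ph)\Psi(p,\ell)&=&\va(p)\Ph(p,\ell)\Psi(p,\ell)=\Ph(p,\ell)\va(p)\Psi(p,\ell)=\Ph(\va\cdot\Psi)(p,\ell),
\end{eqnarray*}
so $\va\cdot$ commutes with every element of $\D^*$. It remains to check that $\va\cdot\Ph\in\D^*$ for every $\Ph\in\D^*$, i.e.\ that the four conditions of Definition \ref{conditions} are preserved.

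Since $P$ is compact, $\|\va\|_\iy:=\sup_{p\in P}\vt\va(p)\vt<\iy$. First, $(p,\ell)\mapsto\va(p)\Ph(p,\ell)$ is strongly continuous on $P$: for $\xi\in\l2\R$ and $(p_n,\ell_n)\to(p,\ell)$,
\begin{eqnarray*}
 \no{\va(p_n)\Ph(p_n,\ell_n)\xi-\va(p)\Ph(p,\ell)\xi}_2&\leq&\vt\va(p_n)\vt\,\no{\Ph(p_n,\ell_n)\xi-\Ph(p,\ell)\xi}_2\\
 &&+\,\vt\va(p_n)-\va(p)\vt\,\no{\Ph(p,\ell)\xi}_2,
\end{eqnarray*}
which tends to $0$ by strong continuity of $\Ph$ and continuity of $\va$. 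Hence $\va\cdot\Ph\in CB(P)$.

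Condition (1): by Definition \ref{sikdef}, $(\va\cdot\Ph)(p)=\oint_{\C^2}\va(p)\Ph(p,\ell)\,d\ell=\va(p)\Ph(p)$. Since $\widehat{C^*(M_p)}$ is a $*$-subalgebra of $\ell^\iy(\widehat{M_p})$ and in particular is closed under multiplication by scalars, $\va(p)\Ph(p)\in\widehat{C^*(M_p)}$ for each $p\in P$.

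Condition (2): the operator $\si^p_\ell$ is $\Z$-summable composition with the multiplication operators $M^p_j$ and $N^p_j$, so for a scalar $c$ one has $\si^p_\ell(c\,\Ph)=\sum_{j\in\Z}M^p_j\circ(c\,\Ph(p,R^p_j\od_p\ell))\circ N^p_j=c\,\si^p_\ell(\Ph)$, the scalar pulling out of each composition. Taking $c=\va(p)$,
\begin{eqnarray*}
 \noop{\si^p_\ell(\va\cdot\Ph)-(\va\cdot\Ph)(p,\ell)}&=&\vt\va(p)\vt\,\noop{\si^p_\ell(\Ph)-\Ph(p,\ell)}\\
 &\leq&\no{\va}_\iy\,\noop{\si^p_\ell(\Ph)-\Ph(p,\ell)},
\end{eqnarray*}
and the right-hand side tends to $0$ as $p\to 0$, uniformly in $\ell\in\C^2$, by condition (2) for $\Ph$.

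Condition (3): for $p_0\ne 0$,
\begin{eqnarray*}
 \no{(\va\cdot\Ph)(p)-(\va\cdot\Ph)(p_0)}&=&\no{\va(p)\Ph(p)-\va(p_0)\Ph(p_0)}\\
 &\leq&\vt\va(p)\vt\,\no{\Ph(p)-\Ph(p_0)}+\vt\va(p)-\va(p_0)\vt\,\no{\Ph(p_0)},
\end{eqnarray*}
which tends to $0$ as $p\to p_0$ by condition (3) for $\Ph$ and continuity of $\va$.

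Condition (4): since $(\va\cdot\Ph)^*(p,\ell)=\ol{\va(p)}\,\Ph(p,\ell)^*=(\ol\va\cdot\Ph^*)(p,\ell)$ and $\ol\va\in C(P)$, the field $(\va\cdot\Ph)^*$ is obtained from $\Ph^*\in\D^*$ by the $\ol\va$-action, so it satisfies conditions (1)--(3) by what has just been shown. Therefore $\va\cdot\Ph\in\D^*$, and $\va\cdot$ is a central multiplier of $\D^*$.
\end{proof}
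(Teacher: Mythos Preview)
Your proof is correct and follows exactly the same approach as the paper: both rest on the single observation that $\si^{p}_\ell(\va\cdot\Ph)=\va(p)\,\si^{p}_\ell(\Ph)$, from which stability of conditions (1)--(4) under the $C(P)$-action is immediate. The paper's proof is a one-line sketch stating just that identity, whereas you have spelled out the verifications of each condition, the $CB(P)$ membership, and centrality; this is more detail than the paper gives, but nothing substantively different.
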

\begin{proof} Clearly, for every $\Ph\in 
D^*(\M) $ and any $\va\in C(P) $, the new operator field also satisfies  the 
conditions (1) to (4), since  $\si^{p} (\va\cdot
\Ph)=\va(p)\cdot 
\si^{p} (\Ph), p\in P,\ell\in \C^2 $.

\end{proof}

\begin{theorem}\label{slashGis Dstar}
The space $\D^*(\M)$ is  a $C^*$-algebra, which is isomorphic to $C^*(\M) $.
 \end{theorem}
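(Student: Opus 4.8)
The strategy is to show that the Fourier transform $a\mapsto \widehat a=(\widehat a(p,\ell))_{(p,\ell)\in P\times\C^2}$ is an isometric $*$-isomorphism of $C^*(\M)$ onto $\D^*(\M)$. First I would check that $\widehat{\,\cdot\,}$ maps $C^*(\M)$ into $\D^*(\M)$: this is essentially Remark \ref{ph0}, since condition (1) is immediate from Definition \ref{cstarfield} together with the disintegration $\LA^p=\oint_{\C^2}\pi^p_\ell\,d\ell$, condition (2) is Proposition \ref{sikminuspiO}, condition (3) is \eqref{ptopo}, and condition (4) follows because $\widehat{a^*}=(\widehat a)^*$. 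The map is clearly a $*$-homomorphism. For injectivity, note that if $\widehat a=0$ then $\pi^p_\ell(a(p))=0$ for all $p,\ell$; since $\{\pi^p_\ell:\ell\in\C^2\}$ together with the characters account (via the quasi-orbit picture of Section \ref{dual bbM} and Remark \ref{someirre}) for enough of $\widehat{M_p}$ to separate points of $C^*(M_p)$ — indeed $\LA^p=\oint\pi^p_\ell d\ell$ is faithful on $C^*(M_p)$ when $M_p$ is exponential, and for $p=0$ the induced representations $\pi^0_{\omega,\ell}$ obtained by further disintegrating $\pi^0_{(f,0)}$ and $\pi^0_{(0,g)}$ exhaust $\widehat{M_\th}$ — we get $a(p)=0$ for every $p$, hence $a=0$. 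Isometry then follows from injectivity of a $*$-homomorphism between $C^*$-algebras once we know the image is closed, or directly from $\no a_{C^*}=\sup_p\no{a(p)}_{C^*}=\sup_{p,\ell}\noop{\pi^p_\ell(a(p))}$, which is exactly the $\ell^\infty$-norm of $\widehat a$ (again using faithfulness of the $\pi^p_\ell$-family on each $C^*(M_p)$).

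The real content is \textbf{surjectivity}: given $\Ph\in\D^*(\M)$, produce $a\in C^*(\M)$ with $\widehat a=\Ph$. The plan is to set $a(p):=\Ph(p)=\oint_{\C^2}\Ph(p,\ell)\,d\ell\in C^*(M_p)$, which makes sense by condition (1), and to show that the family $(a(p))_{p\in P}$ defines an element of $C^*(\M)$, i.e. that $p\mapsto a(p)$ is a continuous field in the appropriate sense. For $p_0\ne 0$ this continuity is precisely condition (3). The delicate point is continuity at $p=0$: one must show that $\Ph(p)\to\Ph(0)$ in the sense needed for membership in $C^*(\M)$, and here condition (2) is the key input — it says $\si^p_\ell(\Ph)$ approximates $\Ph(p,\ell)$ uniformly in $\ell$ as $p\to 0$, and by construction $\si^p_\ell(\Ph)$ is built out of the single representation $\pi^0_{R^p_j\od_p\ell}$ at $p=0$, so it ties the behaviour near $p=0$ to the value $\Ph(0)$. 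Concretely I would argue that for $a\in C^*(\M)$ vanishing at $0$, Corollary \ref{limpikis0} gives $\noop{\LA^p(a)}\to 0$; conversely condition (2) forces $\Ph$, if it "should" vanish at $0$, to actually do so in norm, so the quotient $C^*(\M)/\ker\mu_0$ together with the fibre $C^*(M_0)$ glue correctly.

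To organise surjectivity cleanly I would proceed by a density-plus-approximation argument. Using that $\{F\in\l1\M:\widehat F^2\in C_c^\infty(\R\times\C^2)\}$ is dense (the fact invoked at the end of the proof of Proposition \ref{sikminuspiO}), and that $C^*(M_p)$ for $p\ne0$ has the operator-field description of \cite{Lin-Lud} while $C^*(M_0)=C^*(M_\th)$ enters through its own limit conditions, one shows that the subalgebra $\widehat{C^*(\M)}\subseteq\D^*(\M)$ is dense; since it is also closed (being the isometric image of the $C^*$-algebra $C^*(\M)$), it equals $\D^*(\M)$. Alternatively, and perhaps more transparently, one can run an induction/recollement on the ideal structure: let $J=\{\Ph\in\D^*:\Ph(p)=0\text{ for }p\ne 0\}$ and $J'=\{\Ph\in\D^*:\Ph(0)=0\}$; identify $\D^*/J$ with (a subalgebra of) $C^*(M_0)$-valued objects and $\D^*/J'$ with fields over $P\setminus\{0\}$, check both quotients are hit by $\widehat{C^*(\M)}$ using \cite{Lin-Lud}, \cite{L-S-V} and the star-regularity Theorem \ref{star regular variable}, and then use the five-lemma-type diagram chase for $C^*$-algebras to conclude that $\widehat{C^*(\M)}=\D^*$.

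\textbf{Main obstacle.} The hard part is not the homomorphism or isometry properties but verifying that an abstract $\Ph$ satisfying conditions (1)--(4) genuinely arises from a global element of $C^*(\M)$ — in particular, controlling the fibre at $p=0$, where $M_0=M_\th$ is non-type-I and $C^*(M_\th)$ is itself only understood through limit conditions. Condition (2), via the operators $\si^p_\ell$ and the choice of scales $\ve_\de,r(\de)$ in Definition \ref{rpk def}, is engineered precisely to bridge this gap, so the crux of the proof is a careful estimate showing that condition (2) for $\Ph$ is not merely necessary (Proposition \ref{sikminuspiO}) but, together with (1) and (3), sufficient to reconstruct a continuous field of $C^*$-algebras over all of $P$. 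I expect this reconstruction step — and the bookkeeping needed to see $C^*(M_\th)$ sitting correctly as the fibre over $0$ — to absorb the bulk of the argument.
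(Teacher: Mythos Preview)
Your plan is reasonable but takes a different route from the paper. You aim for direct surjectivity of $a\mapsto\widehat a$ onto $\D^*(\M)$, either by density-plus-closedness or by a five-lemma recollement on ideals. The paper instead runs a \emph{Stone--Weierstrass} argument. Having noted (Remark \ref{ph0}) that $\widehat{C^*(\M)}\subset\D^*(\M)$, it first checks that $\D^*(\M)$ is closed under multiplication --- this is not automatic and uses condition (2): one lifts $\Ph(0),\Ph'(0)$ to $b,b'\in C^*(\M)$ via surjectivity of $\mu_0$ (Remark \ref{surjofmup}) and reduces the verification of (2) for $\Ph\circ\Ph'$ to Proposition \ref{sikminuspiO} applied to $b\cdot b'$. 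Then it exploits the central $C(P)$-action (Proposition \ref{vaaincstar}): any irreducible representation $\pi$ of $\D^*$ induces a character of $C(P)$, hence a point $p_\pi\in P$; a cutoff-function argument shows $\pi$ kills the ideal $J_{p_\pi}=\{\Ph:\Ph(p_\pi)=0\}$, so $\pi$ factors through $\D^*/J_{p_\pi}\simeq C^*(M_{p_\pi})$ and is therefore already a representation of $C^*(\M)$. Stone--Weierstrass (in the form of \cite{Di}) then gives $\D^*(\M)=\widehat{C^*(\M)}$.

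What this buys: the paper never constructs a global preimage for an abstract $\Ph$, and in particular never confronts the non-type-I fibre $C^*(M_\th)$ head-on. The only use of condition (2) in the Stone--Weierstrass step is the soft consequence that $\no{\Ph(p)}\to 0$ when $\Ph(0)=0$. The ``hard part'' you flag --- reconstructing a continuous field through $p=0$ --- is precisely what the Stone--Weierstrass detour sidesteps. Your recollement alternative is viable in principle, but note that it presupposes $\D^*$ is already a $C^*$-algebra in order to form the short exact sequences, so the multiplication-closure step cannot be skipped; and the identification of $\D^*/J_0$ with $C^*(M_\th)$ is itself nontrivial without something like the paper's argument.
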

\begin{proof} 
It is clear that $\D^* $ is an involutive Banach space. 
Let us show that condition $(2) $ is stable under the composition of operator 
fields. Let $\Ph,\Ph'\in \D^* $. 

Then $\Ph(0) $ and $\Ph'(0)$ are contained in $C^*(M_{0}) $.  Since the mapping 
$C^*(\M)\to \C^*(M_p ) $ is 
surjective for any $p\in P $, we  find 
$b,b'\in C^*(\M)  $, 
such that $\Ph(p)=\widehat b(p), \Ph'(p)=\widehat{b'}(p) $. 
Condition $(2) $ then says that 
\begin{eqnarray*}\label{}
 \nn & &\lim_{p\to 0}\noop{ \Ph\circ 
\Ph'(p)-\si^{p} (\Ph \circ  \Ph')} \\
&=&
\lim_{p\to p_0}\noop{\Ph\circ 
\Ph'(p)-\si^{p} (\widehat b\circ  \widehat{b'})}\\
&=&
\lim_{p\to p_0}\noop{\Ph\circ 
\Ph'(p)-\si^{p} (\widehat {b\cdot b'})}\\
&=&
\lim_{p\to p_0}\noop{\Ph\circ 
\Ph'(p)- \widehat {b\cdot b'}(p)}\\
&=&
\lim_{p\to p_0}\noop{\Ph\circ 
\Ph'(p)-\widehat {b}(p)\circ  
\widehat{b'}(p)}\\
&=&
 \lim_{p\to p_0}\noop{\Ph\circ 
\Ph'(p)-\si^{p}  (\widehat b)\circ 
\si^{p}  (\widehat{b'})}
\nn \\
&=&
 \lim_{p\to p_0}\noop{\Ph(p)\circ 
\Ph'(p)-\si^{p}  (\Ph)\circ 
\si^{p}  (\Ph')}\\
&= &
0.
 \end{eqnarray*}
Thus $\D^* $ is a $C^* $-algebra.

Let us show that $\D^*=C^*(\M)  $.
Let $\pi\in \widehat{\D^*}$. Then there exists a  character $\ps $ of the 
algebra $C(P) $, such that 
\begin{eqnarray*}\label{}
 \nn \pi(\va \cdot \Ph) &= &
 \ps(\va)\pi(\Ph),\ \va\in C(P),\ \Ph\in \D^*.
 \end{eqnarray*}
Now every character of the algebra $C(P) $ is given by some point 
evaluation. Hence there exists $p_\pi\in P $ such that
\begin{eqnarray*}\label{}
 \nn\nn \pi(\va \cdot \Ph) &= &
 \va(p_\pi)\pi(\Ph),\ \va\in C(P),\ \Ph\in \D^*.
 \end{eqnarray*}
Let \begin{eqnarray*}\label{}
 \nn J_\pi &:= & J_{p_\pi}=
 \{ \Ph\in \D^*\vert\ \Ph(p_\pi)=0 \}.
 \end{eqnarray*}
Then $J_\pi $ is a closed ideal of $\D^* $. 

Choose  a sequence bounded by 1 of 
functions $(\ps_k)_k\subset C([0,1]), $ such that $\ps=0 $ on  $ 
U_k:=\{ p'; \vert p'-p_\pi\vert  \leq \frac{1}{k}, \}$ 
and $\ps_k(p)= 1,p\not\in U_{2k},\  
k\in\N$. 
Then
for any $\Ph\in J_{p_\pi} $ we have that
\begin{eqnarray*}\label{}
 \nn \limk \ps_k \cdot \PH&= &
 \Ph,
 \end{eqnarray*}
 since 
 \begin{eqnarray*}\label{}
 \nn \no{\ps_k\cdot \Ph-\Ph} &= &
 \sup_{p\in P}\no{\ps_k(p)\Ph(p)-\Ph(p)}_{op}\\
 &= &
 \sup_{\vert p\vert \leq 
\frac{2}{k}}\no{\ps_k(p)\Ph(p)-\Ph(p)}_{op}\\
 \nn  
&\leq &
2\sup_{\vert p-p_\pi\vert \leq \frac{2}{k}}\no{\Ph(p)}_{op}\\
\nn  
&\to &
0
\end{eqnarray*}
since for this $\Ph $ we have that $\lim_{p\to 0}\Ph(p)=0 $ by condition (2) 
if $p_\pi=0 $ or by condition (3) if $p_\pi\ne 0 $.
This shows that 
\begin{eqnarray*}\label{}
 \nn \pi(\Ph) &= &  
 \lim_{k\to\iy} \ps_k(p_\pi)\pi(\Ph)\\
 &=&
 0.
 \end{eqnarray*} 
Hence
\begin{eqnarray*}\label{}
 \nn \pi(J_\pi) &= &
 \{ 0\}.
 \end{eqnarray*}
It follows from the condition (1) that $\D^*/J_p 
=C^*(M_p ) 
$, since we know that already $C^*(\M)/I_{p_\pi}\simeq C^*(M_{p_\pi}) $. 
Hence it follows that 
\begin{eqnarray*}\label{}
 \nn \pi &\simeq &
 \pi_{p_\pi}\circ  \mu_{p_\pi}
 \end{eqnarray*}
for some $ \pi_{p_\pi}\in\widehat{M_{p_\pi}}$.
 
Therefore the subalgebra $C^*(\M)  $ of $\D^* $ separates the irreducible 
representations of $\D^* $ and so by the
Stone-Weierstrass theorem in \cite{Di}.
\begin{eqnarray*}\label{}
 \nn  \D^*(\M) &=&C^*(\M) .
 \end{eqnarray*}
 \end{proof}

 \end{document}